\tikzstyle{block}=[draw opacity=0.7,line width=1.4cm]
\tikzset{LabelStyle/.style= {draw,
fill = yellow,
text = red}}
\newtheorem{thm}{Theorem}[section]
\newtheorem{teor}[thm]{Theorem}
\newtheorem{defn}[thm]{Definition}
\begin{document}
\thispagestyle{empty}
\begin{center}
%\LARGE{\textrm{Estimating missing data in Euclidean Distance \\ Matrices using low rank completion techniques
%}}
\LARGE{\textrm{{
\fontsize{14pt}{\baselineskip}\selectfont \textbf{A novel low-rank matrix completion approach to estimate missing entries in Euclidean distance matrices}}
}}
\end{center}
\vspace{0.5cm}
%\author{N. Moreira $\quad\bullet\quad$ L. T. Duarte $\quad\bullet\quad$ C. Torezzan}
\begin{center}
$\bullet$ N. Moreira$^\dag$  $\quad\bullet$ L. T. Duarte$^\S$ $\quad\bullet$  C. Lavor$^\dag$ $\quad\bullet$  C. Torezzan$^\S$
\end{center}
\vspace{0.5cm}
$\dag$ {\scriptsize{Department of Applied Mathematics, University of Campinas, Campinas - SP - Brasil, 13083-970}}\\
$\S$ {\scriptsize{School of Applied Sciences, University of Campinas, Limeira - SP - Brasil, 13484-350}}

%\address{Instituto de Matemática, Estatística e Computação Científica - UNICAMP}
\vspace{0.5cm}
%\runningauthor{Moreira N., Duarte L. T., Torezzan C.}
%\runningtitle{EDM completion using LRC}

\begin{abstract}
A Euclidean Distance Matrix (EDM) is a table of distance-square between points on a $k$-dimensional Euclidean space, with applications in many fields (e.g. engineering, geodesy, economics, genetics, biochemistry, psychology). A problem that often arises is the absence (or uncertainty) of some EDM elements. In many situations,  only a subset of all pairwise distances is available and it is desired to have some procedure to estimate the missing distances. In this paper, we address the problem of missing data in EDM through low-rank matrix completion techniques. We exploit the fact that the rank of a EDM is at most $k+2$ and does not depend on the number of points, which is, in general, much bigger then $k$. We use a Singular Value Decomposition approach that considers the rank of the matrix to be completed and computes, in each iteration, a parameter that controls the convergence of the method. After performing a number of computational experiments, we could observe that our proposal was able to recover, with high precision, random EDMs with more than one thousand points and up to 98 percent of missing data in few minutes. Additionally, our method required a smaller number of iterations when compared to other competitive state-of-art technique.
\end{abstract}

\noindent\textbf{Keywords}: Euclidean distance matrix, low-rank, matrix completion.

%\classification{91A12, 91A46}

\vspace{1.5cm}
%\newpage

\section{Introduction}

Euclidean distances are found in many real-world applications including fields from engineering to psychology \cite{journal:Leeuw}. For example, in sensor network localization, some pairs of sensors can communicate each other by sending and receiving  signals, which allows them to estimate their distances \cite{journal:Biswas}. Another example is the molecular conformation problem, in which some pairs of atoms are connected and the distances between them can be estimated using nuclear magnetic resonance experiments \cite{journal:Havel}. Other interesting examples and references can be found in \cite{journal:Dokmanic, journal:Liberti, journal:Billinge}.  \\
\indent Due to several reasons, in many situations we cannot access all the pairwise distances and, therefore, it becomes necessary to estimate those missing data. This problem is known as the \emph{Matrix Completion Problem} \cite{inbook:Candes1}.\\
%For instance, in molecular conformation due to chemical properties or, in sensor network localization, due communication range. If we want to build a matrix that represents between the entities (for instance, atoms or sensor), our resulting matrix will have missing data. This leads us to want estimate these missing data. This problem is known as \emph{Matrix Completion} \cite{inbook:Candes1}.\\
\indent In many applications, the matrices to be completed are low-rank ones, that is, the minimum between the number of rows and the number of columns is much larger than the rank of the matrix. For instance, in molecular conformation problem, we may be interested in the coordinates of thousands of atoms (points in $\mathbb{R}^3$). This leads us to a Euclidean distance matrix with rank at most $5$, which is very low comparing to the number points \cite{journal:Dokmanic}.\\
\indent The \emph{Matrix Completion Problem} has attracted the attention in many areas of science and several advances have been obtained. One of the most important results in this field was achieved by Cand\`es and Recht \cite{inbook:Candes1}, who proved that it is possible to perfectly recover most low-rank matrices from what appears to be an incomplete set of entries.\\
\indent Based on this result, some algorithms have been proposed to recover a low-rank matrix. Cai et al \cite{journal:Cai} proposed a first-order singular value thresholding algorithm addressing problems in which the optimal solution has low rank. Mazumder et al \cite{journal:Mazumder} presented another algorithm, called Soft-Impute, which uses the singular value decomposition to find successive approximations through a regulator parameter. Krislock \cite{PhDthesis:Krislock} considers semidefinite facial reduction to complete low-rank Euclidean distance matrices.\\
\indent In this paper, we propose a novel completion approach. The rationale behind our method, which bears a resemblance to the soft-impute algorithm, is that the rank of a EDM, obtained from points in $\mathbb{R}^k$, is at most $k + 2$ and does not depend on the number of points. Therefore, differently from the soft-impute algorithm, our proposal takes the rank of the target matrix into account in order to calculate, for each iteration, the regulator parameter, which is paramount for the algorithm convergence. More specifically, we propose a way to calculate the regulator parameter, for each iteration, based on the most recent approximation. This apparently naive modification has been proved to be highly efficient, allowing to recover incomplete EDMs with high precision. For example, we were able to recover random EDMs with more than one thousand rows and up to $90\%$ of missing data, obtaining absolute error less than $10^{-8}$.  We have performed massive computational experiments that confirm the efficiency of our approach to recover missing data in EDMs, besides requiring a smaller number of iterations when compared to other competitive state-of-art technique, as the soft-impute.

\indent The rest of this paper is organized as follows. In Section $2$, we set the notation and provide a brief background on EDMs. In Section $3$, we introduce the matrix completion problem and present the soft-impute algorithm proposed by Mazumder et al \cite{journal:Mazumder}. In the same section, we present our contributions, defining a new way to choose the regulator parameter of soft-impute. In Section $4$, we present and discuss numerical experiments. Finally, we conclude the paper in Section $5$.

\section{Notations and preliminaries}

In this section, we shall define our notations and also present a short review of some important linear algebra results that are central for this paper.

Let $X=\left[x_1\,\,x_2\,\, \ldots \,\,x_n \right],\,\,\,x_i\in\mathbb{R}^k$, be a matrix whose columns represent $n$ points in a Euclidean $k-$dimensional space. The  distance-square $\left(d_{ij}\right)$ between any two points in $\mathbb{R}^k$ is defined by:
{\small{
\begin{equation}
\label{dist}
  d_{ij}=\|x_i-x_j\|^2_2 \triangleq \langle x_i-x_j\,,\,x_i-x_j\rangle = x_i^Tx_i -2x_i^Tx_j +x_j^Tx_j = \|x_i\|_2^2+\|x_j\|_2^2-2x_i^Tx_j.
\end{equation}
}}
\begin{defn}%[Euclidean Distance Matrix]
The matrix $D=\left[d_{ij}\right]\in \mathbb{R}^{n\times n}$, whose elements represent the square of the distances between $n$ points in $\mathbb{R}^k$ is called \emph{Euclidean Distance Matrix}, denoted by $edm(X)$.
\end{defn}

\begin{defn}%[Gram Matrix]
The matrix $G=X^TX\in\mathbb{R}^{n\times n}$, whose elements represent the inner products between $n$ points in $\mathbb{R}^k$ is called \emph{Gram matrix} associated to $X$.
\end{defn}
According to (\ref{dist}), we can show that
\begin{equation}\label{edm}
  D \triangleq \textbf{1}\left(diag\left(X^TX\right)\right)^T-2X^TX+diag\left(X^TX\right)\textbf{1}^T,
\end{equation}
where \emph{\textbf{1}} represents the column vector of all elements equal to 1 and \emph{diag(A)} represents the column vector of the diagonal elements of $A$.

\begin{teor}\label{rankG}
  The rank of the Gram matrix associated to $X$ is at most $k$.
\end{teor}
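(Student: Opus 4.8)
The plan is to read off the bound directly from the shape of $X$ together with elementary facts about ranks of matrix products, so the argument is short. First I would observe that $X = [x_1\ x_2\ \ldots\ x_n] \in \mathbb{R}^{k\times n}$ has only $k$ rows, hence its row space is a subspace of $\mathbb{R}^n$ of dimension at most $k$; equivalently $\operatorname{rank}(X)\le k$, no matter how large $n$ is. This is the only place where the ambient dimension $k$ enters, and it is exactly what makes the theorem useful for EDMs, where $n$ is typically much larger than $k$.

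Next I would invoke the submultiplicativity of rank: for conformable matrices $A,B$ one has $\operatorname{rank}(AB)\le\min\{\operatorname{rank}(A),\operatorname{rank}(B)\}$. Applying this to the factorization $G = X^TX$ (with $A = X^T$ and $B = X$) gives $\operatorname{rank}(G) = \operatorname{rank}(X^TX)\le\operatorname{rank}(X)\le k$, which is the claim.

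If a self-contained argument is preferred over citing the product inequality, I would instead show $\ker(X^TX) = \ker(X)$: the inclusion $\ker(X)\subseteq\ker(X^TX)$ is immediate, and conversely $X^TX v = 0$ implies $0 = v^TX^TX v = \|Xv\|_2^2$, so $Xv = 0$. Applying the rank--nullity theorem to $G$ as an $n\times n$ matrix and to $X$ as a linear map $\mathbb{R}^n\to\mathbb{R}^k$, the equality of kernels forces $\operatorname{rank}(G) = \operatorname{rank}(X)\le k$.

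There is essentially no obstacle here; the only point requiring a moment of care is not to confuse the rank of $G$, which is an $n\times n$ matrix, with something depending on $n$ — the bound $k$ comes purely from the factorization $G = X^TX$ with $X$ having $k$ rows. One may also note in passing, for later reference, that $G$ is symmetric and positive semidefinite, a fact consistent with the rank bound although not needed for the statement as phrased.
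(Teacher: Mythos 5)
Your proposal is correct and its main line of argument --- bounding $\operatorname{rank}(X)\le k$ from the number of rows and then applying $\operatorname{rank}(AB)\le\min\{\operatorname{rank}(A),\operatorname{rank}(B)\}$ to $G=X^TX$ --- is exactly the proof given in the paper. The additional kernel argument showing $\ker(X^TX)=\ker(X)$ is a nice self-contained alternative (and in fact proves the stronger equality $\operatorname{rank}(G)=\operatorname{rank}(X)$), but it is not needed for the statement.
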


\begin{proof}
  Since $X$ has $k$ rows, we know that $rank(X)\leq k$. From the properties
  \begin{itemize}
    \item $rank(AB)\leq\min(rank(A),\,rank(B))$
  \end{itemize}
  and
  \begin{itemize}
    \item $rank(A)=rank(A^T)$,
  \end{itemize}
 we concluded that
  $$rank(X^TX)=rank(X^TX)\leq\min(rank(X^T),\,rank(X))=rank(X)\leq k.$$
\end{proof}

\begin{teor}\label{rankD}
  The rank of the $edm(X)$ is at most $k+2$.
\end{teor}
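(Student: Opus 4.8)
The plan is to use the identity~\eqref{edm}, which expresses $D$ as a sum of three matrices:
\[
D = \mathbf{1}\bigl(\mathrm{diag}(X^TX)\bigr)^T - 2X^TX + \mathrm{diag}(X^TX)\mathbf{1}^T.
\]
I would argue term by term. The middle term $-2X^TX$ is $-2G$, and by Theorem~\ref{rankG} its rank is at most $k$. Each of the two outer terms is an outer product of two column vectors (namely $\mathbf{1}$ and $\mathrm{diag}(X^TX)$, in one order or the other), so each has rank at most $1$. Then I would invoke the subadditivity of rank, $\mathrm{rank}(A+B)\le \mathrm{rank}(A)+\mathrm{rank}(B)$, applied twice, to conclude that
\[
\mathrm{rank}(D) \le \mathrm{rank}\bigl(\mathbf{1}(\mathrm{diag}(X^TX))^T\bigr) + \mathrm{rank}(-2X^TX) + \mathrm{rank}(\mathrm{diag}(X^TX)\mathbf{1}^T) \le 1 + k + 1 = k+2.
\]

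The key steps, in order, are: (i) recall~\eqref{edm}; (ii) note that a matrix of the form $uv^T$ with $u,v$ column vectors has rank at most $1$, and apply this to the first and third summands; (iii) apply Theorem~\ref{rankG} to bound the rank of the second summand by $k$; (iv) combine via rank subadditivity. None of these steps is genuinely hard; the only thing that needs a word of justification is the subadditivity property $\mathrm{rank}(A+B)\le\mathrm{rank}(A)+\mathrm{rank}(B)$, which follows because the column space of $A+B$ is contained in the sum of the column spaces of $A$ and $B$. If one wants to avoid even citing subadditivity, an alternative is to observe directly that every column of $D$ lies in the span of the columns of $X^TX$ together with the two fixed vectors $\mathbf{1}$ and $\mathrm{diag}(X^TX)$, so the column space of $D$ has dimension at most $k+2$.

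The main (very mild) obstacle is simply making sure the bookkeeping of which vector plays the role of $u$ and which plays $v$ in each outer product is correct, and being explicit that the identity~\eqref{edm} is exactly what reduces the problem to the already-established Theorem~\ref{rankG}. A sharper statement (rank exactly $k+2$ for points in ``general position'', or the known bound $k+1$ when the points are centered appropriately) is not needed here, so I would not pursue it.
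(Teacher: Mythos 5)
Your proposal is correct and follows essentially the same route as the paper: decompose $D$ via~\eqref{edm}, bound the middle term by Theorem~\ref{rankG}, note that each outer term is a rank-one outer product, and conclude by subadditivity of rank. Your added justification of subadditivity via column spaces is a small bonus the paper omits, but the argument is otherwise identical.
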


\begin{proof}
   By Theorem \ref{rankG}, the rank of $X^TX$ is at most $k$. On the other hand, either $\textbf{1}diag(X^TX)^T$ or $diag(X^TX)\textbf{1}^T$ are matrices with rank equal to $1$.\\
   Therefore, from $rank(A+B)\leq rank(A)+rank(B)$ and by (\ref{edm}), we obtain that: $$rank(edm(X))\leq1+k+1=k+2.$$
\end{proof}

The Singular Value Decomposition (SVD) of a matrix $A$ is an important result for this work and it may be stated as the following \cite{inbook:Golub}:

\begin{teor}[Singular Value Decomposition]
Let $A\in\mathbb{R}^{n\times m}$ be a matrix with rank $r$. Then, $A$ can be written in the form
\begin{equation}\label{svd}
  A = U\Sigma V^T,
\end{equation}
where $U\in\mathbb{R}^{n\times n}$ and $V\in\mathbb{R}^{m\times m}$ are orthogonal matrices and $\Sigma\in\mathbb{R}^{n\times m}$ is given by
\begin{equation}\label{Sigma}
  \Sigma=\left[
           \begin{array}{ll}
             \Sigma_1 & 0_{r\times(m-r)} \\
             0_{(n-r)\times r} & 0_{(n-r)\times(m-r)} \\
           \end{array}
         \right],
\end{equation}
with $\Sigma_1=diag\{\sigma_1,\ldots,\sigma_r\}$.
\end{teor}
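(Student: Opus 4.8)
The plan is to build the decomposition from the spectral theorem applied to the symmetric positive semi-definite matrix $A^TA\in\mathbb{R}^{m\times m}$. First I would invoke the spectral theorem to write $A^TA=V\Lambda V^T$ with $V$ orthogonal and $\Lambda=diag\{\lambda_1,\ldots,\lambda_m\}$; since $x^TA^TAx=\|Ax\|_2^2\geq 0$ for every $x$, all eigenvalues satisfy $\lambda_i\geq 0$, and we may order the columns of $V$ so that $\lambda_1\geq\cdots\geq\lambda_r>0=\lambda_{r+1}=\cdots=\lambda_m$. A short argument pins down the index $r$: because $A^TAx=0$ forces $\|Ax\|_2^2=0$ and hence $Ax=0$ (and conversely), the matrices $A$ and $A^TA$ have the same null space, so by rank--nullity $rank(A^TA)=rank(A)=r$; since the rank of a symmetric matrix equals the number of its nonzero eigenvalues, exactly $r$ of the $\lambda_i$ are strictly positive.

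Next I would set $\sigma_i=\sqrt{\lambda_i}$ for $i=1,\ldots,r$, write $v_1,\ldots,v_m$ for the columns of $V$, and define the vectors $u_i=\frac{1}{\sigma_i}Av_i\in\mathbb{R}^n$ for $i=1,\ldots,r$. The key computation is that these are orthonormal: $u_i^Tu_j=\frac{1}{\sigma_i\sigma_j}v_i^TA^TAv_j=\frac{\lambda_j}{\sigma_i\sigma_j}v_i^Tv_j=\delta_{ij}$, using that $v_j$ is an eigenvector of $A^TA$ with eigenvalue $\lambda_j=\sigma_j^2$ and that the $v_i$ are orthonormal. I would then extend $\{u_1,\ldots,u_r\}$ to an orthonormal basis $\{u_1,\ldots,u_n\}$ of $\mathbb{R}^n$ (Gram--Schmidt applied to any completion), and assemble the orthogonal matrix $U=\left[u_1\;\cdots\;u_n\right]$ and the matrix $\Sigma$ with the block form in (\ref{Sigma}).

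Finally I would verify (\ref{svd}) in the equivalent form $AV=U\Sigma$ by comparing columns. For $j\leq r$ the $j$-th column of $AV$ is $Av_j=\sigma_ju_j$, which is precisely the $j$-th column of $U\Sigma$ given the block structure of $\Sigma$. For $j>r$ we have $\|Av_j\|_2^2=v_j^TA^TAv_j=\lambda_j=0$, so $Av_j=0$, matching the zero columns of $U\Sigma$. Since $AV$ and $U\Sigma$ agree on the basis formed by the columns of $V$, they coincide, and right-multiplying by $V^T=V^{-1}$ yields $A=U\Sigma V^T$. The only point requiring care — and the part I would write out most explicitly — is the bookkeeping linking the algebraic rank $r$ to the count of positive eigenvalues, which is what guarantees that $\Sigma$ has exactly the claimed shape with $\Sigma_1=diag\{\sigma_1,\ldots,\sigma_r\}$ nonsingular; the orthonormality identity and the basis extension are routine.
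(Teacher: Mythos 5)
Your proof is correct and complete: the spectral-theorem argument applied to $A^TA$, the identification of $rank(A^TA)$ with $rank(A)$ via the common null space, the orthonormality of the vectors $u_i=\frac{1}{\sigma_i}Av_i$, and the column-by-column verification of $AV=U\Sigma$ are all sound, and you correctly flag the rank bookkeeping as the one step needing care. Note, however, that the paper itself gives no proof of this theorem --- it is stated as a standard result with a citation to Golub and van Loan --- so there is nothing in the paper to compare against; your argument is the classical one found in that reference.
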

The decomposition (\ref{svd}) is called \emph{singular value decomposition} of $A$. The columns of $U$ are orthonormal  vectors, called \emph{right singular vectors} of $A$, and the columns of $V$ are also orthonormal, called \emph{left singular vectors} of $A$. $\sigma_k$ is called \emph{singular value}.

\section{Problem description and proposed model}

\subsection{Low-rank matrix completion}

Let $D$ be a Euclidean Distance Matrix (EDM), for which only a subset of its entries are available. Is it possible to guess the missing positions? How many entries are necessary to recover exactly the EDM? Is there an efficient way to compute the missing distances? This paper aims to answer these questions.

In general, the problem of estimating missing data matrix is known as the \textit{matrix completion problem} and has been intensively studied in the area of signal processing \cite{inbook:Candes1} and, more recently, attracted the attention in many other fields \cite{inbook:Candes2}.

%It is intended to estimate the missing data in $D$, in order to obtain a ``complete" Euclidean Distance Matrix, $\hat{D}$.\\
%This problem has many applications and many methods can be used to solve it \cite{journal:Liberti}. Our approach is based on the low-rank matrix completion problem \cite{journal:Mazumder}.\\
%More specifically, we propose to explore the relations among the matrices $X$, $D$  and $G$, as well as the fact that, in general, the rank of matrices $D$ and $G$ are low, according to theorems \ref{rankG} e \ref{rankD}.\\

In the context of Euclidean Distances, the matrix completion problem can be formulated as follows. Let $D$ be a EDM associated to a set of points $X = \{x_1,\,x_2,\ldots,\,x_n\}$, where $x_i\in\mathbb{R}^k$. Suppose that only a subset of its entries are available, say $\{ d_{ij}: (i,j) \in \Omega \}$, where $\Omega$  is a subset of the complete set of entries of $D$. The goal is to determine a complete EDM, $\hat{D} = [\hat{d}_{ij}]$,  such that  $\hat{d}_{ij}=d_{ij}, \ \forall \ (i,j) \in \Omega$. Additionally, the elements of the recovered matrix $\hat{D}$ must also satisfy the requirements of a EDM (non-negativity, symmetry, and triangular inequality).

The unknown elements in matrix $\hat{D}$ may be considered as variables of an optimization problem, defined by
\begin{equation}\label{minank}
    \begin{array}{ll}
      \emph{minimize} & \emph{rank($\hat{D}$)} \\
      \emph{subject to}\,\,\,   & \sum\limits_{(i,\,j)\in\Omega}(d_{ij}-\hat{d}_{ij})^2\leq\delta, \\
    \end{array}
\end{equation}
\noindent where $\delta$ is a tolerance for the known positions.\\

Unfortunately, the formulation expressed in (\ref{minank}) is not practical since the problem is non-convex and the solution through exact methods is computationally hard \cite{journal:Vandenberghe}. To overcome this difficulty, some relaxed versions of the problem  have been proposed \cite{inbook:Candes1, PhDthesis:Fazel} exploring the nuclear norm of $\hat{D}$. One of this approach is the following formulation:

\begin{equation}\label{minnuclear}
    \begin{array}{ll}
      \emph{minimize} & \|\hat{D}\|_* \\
      \emph{subject to}\,\,\,  & \sum\limits_{(i,\,j)\in\Omega}(d_{ij}-\hat{d}_{ij})^2\leq\delta, \\
    \end{array}
\end{equation}

\noindent where $\|\hat{D}\|_*$ is the nuclear norm of $\hat{D}$ (the sum of the singular values of $\hat{D}$).\\

Mazumder et al \cite{journal:Mazumder} consider an equivalent version of the problem (\ref{minnuclear}) in the Lagrangian form:
\begin{equation}\label{Lagrange}
    \begin{array}{cl}
      \emph{minimize} & \dfrac{1}{2}\sum\limits_{(i,\,j)\in\Omega}(d_{ij}-\hat{d}_{ij})^2+\lambda\|\hat{D}\|_*,
    \end{array}
\end{equation}

\noindent where $\lambda$ is a regularization parameter that controls the nuclear norm of the minimizer $\widehat{X}_\lambda$ of (\ref{Lagrange}). The authors show that there is a binary relation between $\delta\geq0$  and $\lambda\geq0$ on the active domains of each of these parameters.\\
\indent In the same paper, \cite{journal:Mazumder}, it is proposed a heuristic approach to solve the problem. The authors present a method called \emph{Soft-impute} that exploits the SVD factorization of the incomplete matrix. A pseudo-code is presented in  Algorithm $1$, where $S_{\lambda}(X)=U\Sigma_\lambda V^T,$
 with $\Sigma_\lambda=diag[(\sigma_1-\lambda)_+,\ldots,\,(\sigma_r-\lambda)_+]$  $\left(U\Sigma V^T\right.$ denotes the singular value decomposition of $\left.X\right)$, $\Sigma=diag[\sigma_1,\ldots,\,\sigma_r]$, $t_+=\max(t,\,0)$, $r=rank(X)$, $\|\cdot\|_F$ is the Frobenius norm $\left(\|A\|_F=\sqrt{\sum\limits_{i=1}^n\sum\limits_{j=1}^m|a_{ij}|^2}\right)$ and
 $$P_\Omega(X)(i,\,j)=\left\{
                         \begin{array}{ll}
                           X_{i,j}, & \hbox{if}\quad (i,\,j)\in\Omega \\
                           0, & \hbox{if}\quad (i,\,j)\notin\Omega
                         \end{array}.
                       \right.
 $$

\begin{algorithm}
\caption{- Soft-Impute \cite{journal:Mazumder}}
%$\,$\newline
\begin{itemize}
  \item Input: matrix with missing data, $D\in\mathbb{R}^{n\times n}$, and a tolerance $\epsilon>0$.
\end{itemize}
\begin{enumerate}
  \item Inicialize $X^{old}=0$.
  \item Choose a decreasing sequence of scalars: $\lambda_1,\,\lambda_2,\ldots,\,\lambda_K$.
  \item For $i=1$ to $K$ do:
  \begin{enumerate}
    \item Calcule $X^{new}\rightarrow S_{\lambda_i}(P_\Omega(D)+P_\Omega^\perp(X^{old}))$.
    \item If $\dfrac{\|X^{new}-X^{old}\|_F^2}{\|X^{old}\|_F^2}<\epsilon$, exit.
    \item Do $X^{old}\leftarrow X^{new}$.
  \end{enumerate}
\end{enumerate}
\begin{itemize}
    \item Output: $X^{new}$.
\end{itemize}
\end{algorithm}

We propose a more specific formulation focused on Euclidean Distance Matrices. The idea is to exploit the properties of an EDM (the fact of its rank is at most $k+2$) to obtain some additional advantages in the optimization problem. We want to estimate the matrix $\hat{D}$ by solving the following optimization problem:
\begin{equation}
\label{fixed-rank}
    \begin{array}{ll}
      \emph{minimize } & \displaystyle \sum\limits_{(i,\,j)\in\Omega}(d_{ij}-\hat{d}_{ij})^2 \\
      \emph{subject to}\,\,\,  & \mbox{rank}(\hat{D}) = k+2. \\
    \end{array}
\end{equation}

Since there is not a straightforward formula for $\mbox{rank}(\hat{D})$ as a function of the variables $\hat{d}_{ij}$, we propose a heuristic approach to solve (\ref{fixed-rank}) based on a modification in the Soft-impute Algorithm. The resulting method is called the \textit{Fixed-Rank Soft-Impute} and it is described in the next section.

\subsection{Fixed-Rank Matrix Completion}

There are two main difficulties to apply Algorithm $1$ to solve the formulation (\ref{fixed-rank}) -- actually to solve any problem where the rank of $D$ it is a prior information. The first difficult is that the parameter $\lambda$ is defined outside of the loop, implying that it is insensitive to the convergence that occurs inside the loop. The second difficulty is that there is no specific details about the choice of the parameter $\lambda$. In \cite{journal:Mazumder}, the authors mention a \textit{warm starts}, but without presenting any concrete way of obtaining the values of $\lambda$. In our experiments, we realized that convergence of Algorithm $1$ seems to be very sensitive on the choice of $\lambda$ which motivate us to propose a new way of approach.

The new way to calculate the regulator parameter was also thought out to overcome the second difficult on the use of  Algorithm $1$ to solve (\ref{fixed-rank}). Since now we know the rank of the target matrix, let say rank$(X)=r$, we propose to compute the values of $\lambda$, in each iteration, as
\begin{equation}
\label{new_lambda}
\lambda=\beta\sigma_{r+1},
\end{equation}
where $\beta\in(0,1)$ and $\sigma_{r+1}$ is the $(r+1)$th singular value of the matrix $P_\Omega(D)+P_\Omega^\perp(X^{old})$, where $X^{old}$ is the most recent approximation.

\indent The idea is to link the parameter $\lambda$ with the most recent approximation of $X$ and leave its calculation automatically. Since we wish to converge to a matrix $X$, such that rank$(X) = r$, we should have only $r$ singular values greater than zero. Thus, the singular value $\sigma_{r+1}$ give us a valuable information about how far we are from the solution and, therefore, we can calibrate the step size of the next iteration.

The only exogenous parameter we have now is the value of $\beta \in (0,1)$, that may be adjusted to control the rate of decreasing the singular values in each iteration of the method. The big is $\beta$ the fast is the way of decreasing the singular values in each iteration. Some matrices are very sensitive on the rate of decreasing of the less significant singular values and Algorithm $2$ may lead to a rank $r$ matrix without achieving the tolerance $\epsilon$. In this case, we suggest to try a smaller value for $\beta$ and so running a more slowly decreasing.

%his new way to choose the parameter $\lambda$, the convergence improved significantly compared to a simple choice of a decreasing sequence of values.\\

We summarize our approach in the following algorithm:

\begin{algorithm}[H]
\caption{- Fixed-Rank Soft-Impute}
%$\,$\newline
\begin{itemize}
  \item Inputs: a matrix with missing data, $D\in\mathbb{R}^{n\times n}$, the rank $r$ of the target matrix $X$ and a tolerance $\epsilon>0$.
\end{itemize}
\begin{enumerate}
  \item Inicialize $X^{old}=0$.
  \item Choose a initial value to $\lambda$ ($\lambda=\lambda_0$).
  \item For $i=1$ to $K$ do:
  \begin{enumerate}
    \item Calcule $X^{new}\rightarrow S_{\lambda}(P_\Omega(D)+P_\Omega^\perp(X^{old}))$.
    \item Set $\lambda=\beta\sigma_{r+1}$, with $\beta\in(0,1)$.
    \item If $\dfrac{\|X^{new}-X^{old}\|_F^2}{\|X^{old}\|_F^2}<\epsilon$, exit.
    \item Do $X^{old}\leftarrow X^{new}$.
  \end{enumerate}
\end{enumerate}
\begin{itemize}
    \item Output: $X^{new}$.
\end{itemize}
\end{algorithm}

%\noindent In Algorithm $2$, $\sigma_{r+1}$ is the $(r+1)$th singular value of $P_\Omega(X)+P_\Omega^\perp(X^{old})$.\\
%The assumption that we know the matrix rank is perfectly reasonable, since in many application we know this information.\\

We remark that there is no additional cost to obtain $\lambda$, since the SVD is calculated in step 3(a). The apparently naive modification on the calculation of $\lambda$ has been proved to be highly efficient, allowing us to recover incomplete EDMs with high precision, even in the case where less than $5\%$ of the positions of $D$ are known. In the next section, we present some results of computational experiments we have performed using Algorithm 2.

%\subsection{Test methodology}

The tests performed were based on a random generation of distance matrices and random deletion of a percentage $p$ of their elements. The original matrix $D$ was maintained for comparison purposes. We used the relative error, defined by $er = \dfrac{\|D -\hat{D}\|_F^2}{\|D\|_F^2}$, where $||A||_F$ represents the Frobenius norm of matrix $A$. We also compute the maximum error $(max\_err)$ between two corespondent elements, i.e.  $max\_err  = max|d_{ij}-\hat{d}_{ij}|$, which is a very rigorous criteria of convergence.

\section{Numerical results}

We have implemented the methods in Matlab language and performed the tests using a microcomputer with Windows-$64$ bits, Intel Core i-$5$, $2430M$ CPU, $2.40$GHz and $4$GB of RAM. \\
\indent We considered two classes of matrices. In order to test Algorithm 2 in a general context, we generated $n\times n$ matrices of rank $r$ according to the form $M=M_LM^*_R$, where $M_L$ and $M_R$ are $n\times r$ independent matrices, having i.i.d. Gaussian entries, as done in \cite{journal:Cai}. Another class is the main interest of this paper. We generated random points in $\mathbb{R}^k$ and then calculated the correspondent EMD. In both cases, we deleted a percentage $p$ of his entries, uniformly at random. All results presented are averages of $10$ simulations. \\
\indent In order to set a good choice for the parameter $\beta$, we performed a set of warm up simulations varying $\beta$ in the interval $(0,\,1)$ and then choosing the most appropriated value for each type of test. Figure \ref{beta} shows the results of these simulations for EDMs, suggesting that, for this case, we may consider $\beta \approx 0.8$.
\begin{figure}[H]
  \centering
  \includegraphics[scale=.3 ]{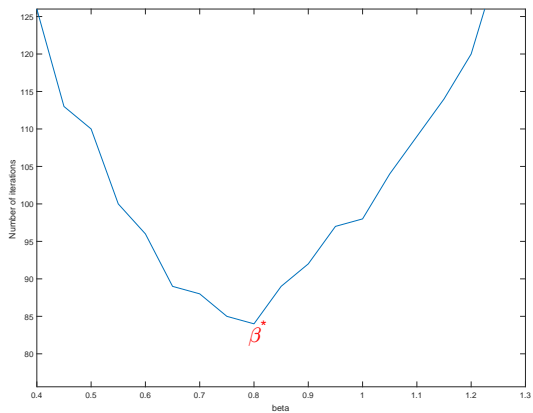}
  \caption{{\scriptsize{Number of iterations (average) versus beta. We made $100$ simulations, using $100\times 100$ EDMs associated to points in $\mathbb{R}^2$, with $40\%$ of missing data.}}}
  \label{beta}
\end{figure}

In Table \ref{tab3}, we present a comparison between results obtained with Algorithm $2$, for the case of random matrices of form $M=M_LM^*_R$, and results presented in \cite{journal:Cai} for the same size of matrices and percentage of deletion.\\

%Cai et al in \cite{journal:Cai}, presented results for different matrix size and rank.

\begin{table}[H]{\scriptsize{
\centering
\begin{tabular}{|c|c|c|c|c|c|c|c|c|}
\hline
\multicolumn{3}{|c|}{} & \multicolumn{6}{c|}{Results}\tabularnewline
\hline
\multicolumn{3}{|c|}{Unknow matrix
} & \multicolumn{3}{c|}{Results by \cite{journal:Cai} } & \multicolumn{3}{c|}{Our approach}\tabularnewline
\hline
size ($n\times n$) & rank ($r$) & $|\Omega|/n^{2}$ & time ($s$) & \# iter  & relative error & time ($s$) & \# iter  & relative error\tabularnewline
\hline
\multirow{3}{*}{$1000\times1000$} & $10$ & $0.12$ & $23$ & $117$ & $1.64\times10^{-4}$ & $226$ & $386$ & $9.95\times10^{-5}$\tabularnewline
\cline{2-9}
 & $50$ & $0.39$ & $193$ & $114$ & $1.59\times10^{-4}$ & $73.2$ & $103$ & $9.93\times10^{-5}$\tabularnewline
\cline{2-9}
 & $100$ & $0.57$ & $501$ & $129$ & $1.68\times10^{-4}$ & $129$ & $79$ & $9.68\times10^{-5}$\tabularnewline
\hline
\end{tabular}
\caption{{\scriptsize{Comparison between our approach and Cai et al in \cite{journal:Cai}, for a  $1000\times1000$ matrix.}}}
\label{tab3}
}}
\end{table}

%We also used these numbers in our approach. In table \ref{tab4} used the same tolerance used to obtained the table \ref{tab3}. We can note, according table \ref{tab3} that our approach has a good performance, wining STV algorithm \cite{journal:Cai} in some cases.

In Table \ref{tab5}, we show the advantage of our approach. With the same parameters  used in Table \ref{tab3}, we set the tolerance $\epsilon = 10^{-10}$. We can see that, in the worst case, we were able to recover $D$ with $7$ correct decimal places ($max\_error \leq 10^{-8}$), showing the high accuracy of our approach.

\begin{table}[H]{\scriptsize{
  \centering
  \begin{tabular}{|c|c|c|c|c|c|c|}
    \hline
    \multicolumn{3}{|c|}{$Unknow\ matrix$} & \multicolumn{4}{|c|}{$Results$}\tabularnewline
    \hline
    \hline
    $size\ (n\times n)$ & $rank\ (r)$ & $|\Omega|/n^{2}$ & $relative\ error$ & $max\ error$ & $\#\ iter$ & $time\ (s)$\tabularnewline
    \hline
    \multirow{3}{*}{$1000\times1000$} & $10$ & $0.12$ & $9.83\times10^{-11}$ & $1.18\times10^{-9}$ & $1084$ & $1003.2$\tabularnewline
    \cline{2-7}
     & $50$ & $0.39$ & $9.72\times10^{-11}$ & $8.16\times10^{-9}$ & $244$ & $238.2$\tabularnewline
    \cline{2-7}
     & $100$ & $0.57$ & $9.35\times10^{-11}$ & $1.94\times10^{-8}$ & $201$ & $192.6$\tabularnewline
    \hline
\end{tabular}
  \caption{{\scriptsize{$1000\times1000$ matrix completion performed by our approach, setting tolerance equal to $10^{-10}$.}}}
  \label{tab5}
  }}
\end{table}

%Tables \ref{tab1} and \ref{tab2} present a set of results in order to compare algorithms $1$ and $2$.
Now, we focus on EDM's. We generated points in $\mathbb{R}^k$, according to a uniform distribution  and then compute the corresponding EDM $D$. For each experiment we deleted a percentage $p$ of the entries of $D$, uniformly at random, and we applied Algorithm $2$ to recover the missing data.

Table \ref{tab2} shows numerical results of tests performed with EMD's obtained from random generation. We varied the number of points generated, the space dimension, the percentage of deletion, and we fixed the tolerance (relative error) equal to $10^{-8}$.\\

\begin{table}[H]{\scriptsize{
\centering

\begin{tabular}{|c|c|c|c|c|c|}
\hline
\# Points (n) & Dimension (d) & Rank (r) & Deletion (\%) & \# Iteration  & Maximum error\tabularnewline
\hline
\hline
$500$ & $10$ & $12$ & $50$ & $61$ & $2.76\times10^{-7}$\tabularnewline
\hline
$1000$ & $3$ & $5$ & $70$ & $82$ & $7.11\times10^{-8}$\tabularnewline
\hline
$2000$ & $10$ & $12$ & $70$ & $86$ & $2.45\times10^{-7}$\tabularnewline
\hline
$5000$ & $50$ & $52$ & $50$ & $44$ & $1.27\times10^{-6}$\tabularnewline
\hline
$5000$ & $200$ & $202$ & $50$ & $74$ & $8.32\times10^{-6}$\tabularnewline
\hline
$5000$ & $50$ & $52$ & $80$ & $193$ & $1.61\times10^{-6}$\tabularnewline
\hline
$5000$ & $3$ & $5$ & $90$ & $201$ & $1.24\times10^{-4}$\tabularnewline
\hline
$10000$ & $100$ & $102$ & $80$ & $187$ & $3.40\times10^{-6}$\tabularnewline
\hline
\end{tabular}

\caption{{\scriptsize{Performance of Algorithm $2$ varying some parameters such as the number of points generated, the space dimension, and the percentage of random deletion. For every experiment we fixed the tolerance (relative error) equal to $10^{-8}$.}}}
\label{tab2}
}}
\end{table}

According to these results, we can see that Algorithm $2$ was able to recover missing data with very high accuracy. Additionally, Table \ref{tab2} shows us that we can recover matrices with not so small rank. We recovered, for instance, matrices with rank equal to $100$ and $200$ with a very good accuracy. \\

For the results presented in Table \ref{tab6}, we fixed $n = 1000$ points in dimension $r = 8$ and we varied the percentage of deletion of data: $p \in \{10, 20,\ldots, 90\}$.

\begin{table}[H]
  \centering
  {\scriptsize{
  \begin{tabular}{|c|c|c|c|c|}
\hline
\multirow{2}{*}{Deletion (\%)} & \multicolumn{4}{c|}{Computational results}\tabularnewline
\cline{2-5}
 & $time\ (s)$ & $\#\ iter$ & $relative\ error$ & $max\ error$\tabularnewline
\hline
$10$ & $10.2$ &$24$ & $5.73\times10^{-13}$ & $7.67\times10^{-12}$\tabularnewline
\hline
$20$ & $9$ & $35$ & $6.04\times10^{-13}$ & $1.31\times10^{-11}$\tabularnewline
\hline
$30$ & $17.4$ & $43$ & $5.04\times10^{-13}$ & $7.22\times10^{-12}$\tabularnewline
\hline
$40$ & $18.6$ & $58$ & $6.79\times10^{-13}$ & $8.35\times10^{-12}$\tabularnewline
\hline
$50$ & $27$ & $75$ & $8.53\times10^{-13}$ & $9.43\times10^{-12}$\tabularnewline
\hline
$60$ & $38.4$ & $110$ & $9.08\times10^{-13}$ & $1.72\times10^{-11}$\tabularnewline
\hline
$70$ & $69$ & $184$ & $9.18\times10^{-13}$ & $1.32\times10^{-11}$\tabularnewline
\hline
$80$ & $170.4$ & $430$ & $9.42\times10^{-13}$ & $2.05\times10^{-11}$\tabularnewline
\hline
$90$ & $367.8$ & $1000$ & $2.91\times10^{-07}$ & $3.30\times10^{-06}$\tabularnewline
\hline
\end{tabular}
}}
  \caption{{\scriptsize{Performance of Fixed Rank Soft Impute. Results of a simulation with an EDM associated to 1000 points in $\mathbb{R}^8$, with random deletion $p \in \{10, 20,\ldots, 90\}$, a maximum number of iteration of 1000 and tolerance equal to $10^{-12}$.}}}
  \label{tab6}
\end{table}

As expected, as the percentage of the missing data increases, the algorithm needs more iterations to converge. Even so, in all cases we were able to achieve at error maximum up to $10^{-6}$ with less than 1000 iterations in a few minutes of calculation.\\

We also varied the size of EDMs, using points in $\mathbb{R}^5$ and deleting $p=70\%$ of the data, to see how our approach behaves. We summarized the results in Table \ref{tab7}.

\begin{table}[H]
  \centering
  {\scriptsize{
  \begin{tabular}{|c|c|c|c|c|}
\hline
\multirow{2}{*}{$\# points$ ($n$)} & \multicolumn{4}{c|}{Computational results}\tabularnewline
\cline{2-5}
 & $time\ (s)$ & $\#\ iter$ & $relative\ error$ & $max\ error$\tabularnewline
\hline
$200$ & $10.8$ & $473$ & $9.72\times10^{-09}$ & $5.61\times10^{-08}$\tabularnewline
\hline
$500$ & $16.2$ & $165$ & $9.33\times10^{-09}$ & $8.17\times10^{-08}$\tabularnewline
\hline
$1000$ & $29.4$ & $108$ & $9.13\times10^{-09}$ & $1.05\times10^{-07}$\tabularnewline
\hline
$2000$ & $149.4$ & $83$ & $8.16\times10^{-09}$ & $8.46\times10^{-08}$\tabularnewline
\hline
\end{tabular}
}}
  \caption{{\scriptsize{Performance of Fixed Rank Soft Impute. Results of a simulation with a EDM associated to $n$ points $(n\in\{200,\ 500,\ 1000,\ 2000\})$ in $\mathbb{R}^5$, with random deletion of $70\%$ of data, a maximum number of iteration of 1000 and tolerance equal to $10^{-08}$.}}}
  \label{tab7}
\end{table}

We can note that, for a fixed percentage of missing data, the number of required iterations decreases when the size of the EMD grows. As we can see, in all cases tested, our approach has been proved to be an efficient way to recover missing data in EDMs, outperforming other competitive state-of-art technique. Finally, we have performed some examples with EDMs related to 5000 random points in $\mathbb{R}^3$ and considered $p=0.98$ of missing data. In these cases, our approach was able to recover the exact matrix with absolute error smaller than $10^{-7}$.

\section{Conclusions}

We proposed the application of low-rank matrix completion approach to estimate missing data in Euclidean distance matrices (EDMs). We presented a specific formulation of the matrix completion and proposed an algorithm based on singular value decomposition. Computational results indicate that our approach is efficient to estimate missing positions in matrices whose rank is known a priori. We were able to recover matrices from a few percentage of entries, with very  high accuracy. The method presented here can also be used for other applications in matrix completion when the rank of the target matrix is know.

\section*{Acknowledgements}

The authors would like to thank the Brazilian research agencies CAPES, CNPq, and FAPESP for their financial support.

%\section{Bibliography}

\end{document}